\newcommand{\gl}{\mathfrak{\mathop{gl}}}
\newcommand{\GL}{\mathrm{\mathop{GL}}}
\newcommand{\C}{\mathbb{C}}
\newcommand{\R}{\mathbb{R}}
\newcommand{\Q}{\mathbb{Q}}
\newcommand{\g}{\mathfrak{g}}
\renewcommand{\a}{\mathfrak{a}}
\newcommand{\ad}{{\mathop{\mathrm{ad}}}}
\newcommand{\h}{\mathfrak{h}}
\newcommand{\y}{\mathfrak{y}}
\newcommand{\cc}{\mathfrak{c}}
\newcommand{\sqq}{\sqrt{-1}}
\newtheorem{theorem}{Theorem}[section]
\newtheorem{lemma}[theorem]{Lemma}
\newtheorem{corollary}[theorem]{Corollary}
\theoremstyle{definition}
\newtheorem{remark}[theorem]{Remark}
\numberwithin{equation}{section}
\begin{document}

\title[Constructive decomposition of real representations]{A constructive
method for decomposing real representations}

\author[S. Ali]{Sajid Ali}

\address{Department of Mathematics and Statistics,
Zayed University, Dubai, United Arab Emirate}

\email{sajid$\_$ali@mail.com}

\author[H. Azad]{Hassan Azad}

\address{Abdus Salam School of Mathematical Sciences, GC University Lahore, 68-B, New Muslim
Town, Lahore 54600, Pakistan}

\email{hassan.azad@sms.edu.pk}

\author[I. Biswas]{Indranil Biswas}

\address{School of Mathematics, Tata Institute of Fundamental
Research, Homi Bhabha Road, Mumbai 400005, India}

\email{indranil@math.tifr.res.in}

\author[W. A. de Graaf]{Willem A. de Graaf}

\address{Dipartimento di Matematica, Via Sommarive 14,
I-38123 Povo (Trento), Italy}

\email{degraaf@science.unitn.it}

\subjclass[2010]{17B45,\, 20C40,\, 16Z05,\, 17B81.}

\keywords{Lie algebras, Weyl group, real representation, highest
weights, joint-invariants.}

\date{}

\begin{abstract}
A constructive method for decomposing finite dimensional representations of
semisimple real Lie algebras is developed. The method is illustrated by an
example. We also discuss an implementation of the algorithm in the language
of the computer algebra system {\sf GAP}4.
\end{abstract}

\maketitle

\tableofcontents

\section{Introduction}\label{se1}

A fundamental result in the theory of Lie algebras and Lie groups says that
a representation of a semisimple Lie algebra over a field of characteristic 0
is completely reducible, that is, it can be written as the direct sum of
irreducible representations. This immediately leads to the algorithmic problem
to compute the irreducible representations appearing in this decomposition.

We consider this problem over the base fields $\C$ and $\R$. Over $\C$ the
algorithm is an immediate consequence of the classification of the irreducible
representations of a semisimple Lie algebra in terms of highest weights.
We briefly review this in Section \ref{se3}.

The theory of finite dimensional real representations of
real semisimple Lie algebras --- due to Cartan, Iwahori and Karpelevich ---
is explained in detail in the books \cite{GG}, \cite{On}. In Section
\ref{se4} we describe our algorithm for decomposing representations of
semisimple Lie algebras over $\R$. It uses the representation theory
of these algebras, and consequently is more complicated than the algorithm in the
complex case. The basic structure of the algorithm is as follows. First
we decompose the representation over $\C$. The resulting irreducible complex
modules are either self-conjugate, or appear in pairs of conjugate modules.
The real points of a self-conjugate module yield an irreducible real module.
Furthermore, the real points in the direct sum of a complex module and its
conjugate also give an irreducible real module. In order to find the
self conjugate complex modules, and the conjugate pairs, we use an
endomorphism related to a special element of the Weyl group.

We have implemented the algorithm in the language of the computer algebra
system {\sf GAP}4 \cite{gap4}, using the functionality of the package
{\sf CoReLG} \cite{corelg}. This implementation will be part of the
new release of that package. 

\section{Preliminaries and notation}\label{se2n}

Throughout this paper $\g$ denotes a real semisimple Lie algebra. Its
complexification $\g^\C \,=\, \g\otimes_\R \C$ is a complex semisimple Lie
algebra. Let $\rho \,:\, \g \,\longrightarrow\, \gl(V)$ be a finite-dimensional real representation
of $\g$. Then its complexification $\rho^\C$ is the representation of
$\g^\C$ in $\gl(V^\C)$, where $V^\C = V\otimes_\R \C$; we recall that $\rho^\C$ is defined by
$$\rho^\C (X+\sqq \cdot Y ) \,=\, \rho(X) + \sqq \cdot \rho(Y) \text{ for } X,Y\,\in\, \g\, .$$
Conversely, starting with a complex representation $\rho^\C \,:\, \g^\C\,\longrightarrow\,
 \gl(V^\C)$ we can view $V^\C$ as a real vector space (of dimension
$2\cdot \dim_\C V^\C$) which we denote by $V^\C_\R$. Then by restricting $\rho^\C$
to $\g$ we obtain a representation $\rho \,:\, \g \,\longrightarrow\, \gl(V^\C_\R)$. This
representation $\rho$ is called the realification of $\rho^\C$.

When convenient, we shall also use the equivalent language of modules. 
If $\rho$ is as above, then $V$ is said to be a $\g$-module, and we also
write $X\cdot v$ in place of $\rho(X)v$ for $X\,\in\, \g$, $v\,\in\, V$. Similarly,
we say that $V^\C$ is a $\g^\C$-module. 

We let $G_\rho$ be the connected Lie group in $\GL(V)$ with Lie algebra
$\rho(\g)$. Similarly, $G_\rho^\C$ is the connected subgroup of $\GL(V^\C)$
with Lie algebra $\rho(\g^\C)$. Then the connected component of the group
of real points of $G_\rho^\C$ containing the identity element is the group
$G_\rho$. The group $G_\rho$ is generated by real 1-parameter subgroups
$$\{\exp (r\rho(X))\,\mid\,
r\,\in\, \mathbb{R}\}\, ,\ \ X\,\in\, \mathfrak{g}\, ,$$
while $G_\rho^{\mathbb{C}}$ is generated by the complex 1-parameter subgroups
$$\{\exp (z\rho^\C(X))\,\mid\,
z\,\in\, \mathbb{C}\}\, ,\ \ X\,\in\, \mathfrak{g}^\C\, .$$

We have $\g^\C \,=\, \g + \sqq \g$, and therefore $\g^\C$ has a conjugation
$$
\sigma\, :\, {\mathfrak{g}}^{\mathbb{C}}\,\longrightarrow\,
{\mathfrak{g}}^{\mathbb{C}}\, ,\ \ X+\sqrt{-1}\cdot Y\, \longmapsto\,
X-\sqrt{-1}\cdot Y\, .
$$
Note that $[\sigma(X),\,\sigma(Y)] \,=\, \sigma([X,\,Y])$ for all $X,\,Y\,\in\, \g^\C$.
Similarly, $V^{\mathbb{C}}$, $\mathfrak{gl}(V^{\mathbb{C}})$ and $G^{\mathbb{C}}$ have
conjugations defined in the obvious way. We shall denote all of them by the same
symbol $\sigma$, and occasionally write $\sigma (v)\,=\,\overline{v}$ etc.

\subsection{Roots and reflections}

Let $\mathfrak{c}$ be a Cartan subalgebra of $\mathfrak{g}$. By definition,
$\mathfrak{c}$ is a maximal abelian subalgebra of $\mathfrak{g}$ which is
diagonalizable in the complexification ${\mathfrak{g}}^{\mathbb{C}}$ of
$\mathfrak{g}$. So $\cc^\C$ is a Cartan subalgebra of $\g^\C$.
A nonzero element $X\,\in\, {\mathfrak{g}}^{\mathbb{C}}$ such that
$$[H,\, X]\,=\,\alpha(H)X$$ for all $H$ in $\mathfrak{c}^\C$ is called
a root vector and the corresponding linear function $$\alpha \,:\, \cc^\C
\,\longrightarrow\, \C$$
a root of the Cartan subalgebra $\mathfrak{c}^\C$. Then the space
$$\g^\C_\alpha \,=\, \{ X\,\in \,\g^\C \,\mid\, [H,\,X] \,=\, \alpha(H) X\ \text{ for all }
H\,\in\, \cc^\C\}$$
is 1-dimensional and it is called the root space of $\alpha$.

A complex number $z\,=\,a+\sqrt{-1}\cdot b$ with $a,\,b\,\in\, \mathbb R$
is said to be positive if either $a\, \, >\, 0$ or $a\, \, =\, 0$
and $b\,>\,0$.

Fixing a basis in $\mathfrak{c}$ defines a cone in the set of complex valued
linear functions on $\mathfrak{c}$. More precisely, if we fix an ordered basis $H_1,\,\cdots
,\, H_r$ of $\mathfrak{c}$, a nonzero complex valued linear function $\lambda$ on
$\mathfrak{c}$ is said to be positive if the first nonzero complex number among
$\lambda (H_1),\, \cdots ,\, \lambda (H_r)$ is a positive complex number. Otherwise
it is called negative.
Positive roots which are not a sum of two positive roots are called simple roots.
Thus a knowledge of all positive roots determines algorithmically the Dynkin
diagram of ${\mathfrak{g}}^{\mathbb{C}}$.

In general, any Borel subalgebra determines a system of positive roots --- without
any reference to a basis of a given Cartan subalgebra of the Borel subalgebra. This
happens typically when one embeds a given subalgebra of ad--nilpotent elements
into a maximal subalgebra of such elements, following the algorithms in
\cite{AABGM}. An example is given in Section 5 of \cite{AABGM}, where a system of type
$G_2$ in ${\mathbb{R}}^2$ with rational coordinates comes up when one embeds a
commuting algebra of translations into a maximal ad--nilpotent subalgebra.

Let $\alpha_1,\,\cdots,\,\alpha_\ell$ be the simple roots. Let $C$ denote the
corresponding Cartan matrix of the root system. Then there are
elements $X_{\pm \alpha_i} \,\in\, \g_{\pm \alpha_i}^\C$, $H_i\,\in\, \cc^\C$ with
\begin{align*}
 & [ H_i,\,H_j ] \,=\, 0\\
 & [ X_{\alpha_i},\, X_{-\alpha_j} ] \,=\, \delta_{ij} H_i\\
 & [ H_i, \,X_{\pm \alpha_j} ] \,= \,\pm C(j,i) X_{\alpha_j}.
\end{align*}

A set of such elements with the above commutation relations is said to be a canonical
generating set of $\g^\C$.

For a root $\alpha$ the corresponding reflection in the Weyl group is denoted
by $w_\alpha$. For $X\,\in\, \g^\C$, let $\ad \,X \,:\, \g^\C\,\longrightarrow\,
\g^\C$ be the adjoint
map, that is $\ad\, X (Y)\, =\, [X,\,Y] $. If $X\,\in\, \g^\C$ is such that $\ad \,X$ is
nilpotent, then $\exp( \ad\, X)$ is an automorphism of $\g$.
For $1\,\leq\, i\,\leq \,\ell$ define
$$\dot{w}_{\alpha_i} \,=\, \exp (\ad X_{\alpha_i}) \exp( -\ad X_{-\alpha_i})
\exp(\ad X_{\alpha_i})\, .$$
Then $\dot{w}_{\alpha_i}$ is an automorphism of $\g^\C$ with
$$\dot{w}_{\alpha_i} (\g_\alpha^\C) \,=\, \g_{w_{\alpha_i}(\alpha)}^\C$$ (see, e.g.,
\cite[Chapter 3, Lemma 19(b)]{St}).

Let $X\,\in\, \g^\C$ be nilpotent. Then by an elementary calculation (see for example
\cite[Lemma 2.3.1]{deG2}) it is seen that for any $Y\,\in\, \g^\C$ we have
$$\rho^\C (\exp( \ad \,X)(Y)) \,= \,\exp( \rho^\C(X) ) \rho^\C(Y) \exp( \rho^\C(X) )^{-1}\, .$$
So if we define
$$\dot{w}_{\alpha_i,\rho} \,=\, \exp (\rho^\C(X_{\alpha_i})) \exp( -\rho^\C( X_{-\alpha_i}))
\exp(\rho^\C( X_{\alpha_i}))\, ,$$
then $\dot{w}_{\alpha_i,\rho}\,\in\, G^\C_\rho$, and 
for $Y\,\in\, \g^\C$ we have $\rho^\C(\dot{w}_{\alpha_i}(Y))\, =\,
\dot{w}_{\alpha_i,\rho} \rho^\C(Y) \dot{w}_{\alpha_i,\rho}^{-1}$.

The Weyl group acts on the dual vector space $(\cc^\C)^*$, and also in a compatible
way on $\cc^\C$. Here we do not go in this direction, but only refer to
\cite[Remark 2.9.9]{deG2}. For $\mu\,\in\, (\cc^\C)^*$ and $H\,\in\, \cc^\C$ we have
that $w_\alpha(\mu)(H)\, =\, \mu(w_\alpha(H))$. Furthermore, for $H\,\in\, \cc^\C$ it
is well-known (see for example \cite[Lemma 8.4.1]{deG3}) that
$\dot{w}_{\alpha_i}(H) \,=\, w_{\alpha_i}(H)$, so that
\begin{equation}\label{eqw}
w_{\alpha_i}(\mu)(H)\, =\, \mu (\dot{w}_{\alpha_i}(H))\, .
\end{equation}

\subsection{Implementing the algorithms}\label{sec:comp}

For our algorithms we assume that a Lie algebra $\g$ is given by a basis and
a multiplication table, as this is the most convenient way to specify a
finite-dimensional Lie algebra in a way susceptible to computations
(see \cite{deG3}). Elements of $\g$ are given as coefficient vectors with respect
to the input basis. A representation $\rho \,:\, \g\,\longrightarrow\,\gl(V)$ is given by a
function that computes $\rho(X)v$ for given $X\,\in\,\g$, $v\,\in\, V$.

We have implemented the algorithm in the computer algebra system {\sf GAP}4
(\cite{gap4}) using the package {\sf CoReLG} (\cite{corelg}) which contains
functionality for working with real forms of complex simple Lie algebras.
These real forms in the package are given by a basis along with a
multiplication table. The structure constants in these multiplication
tables lie in $\Q$. The package {\sf CoReLG} also contains algorithms for
finding the Cartan subalgebras of $\g$ (up to conjugacy by the adjoint group).
It turns out that if we work with the real forms as given by the system
then the Cartan subalgebras are split over the field $\Q(\sqq)$. So in order to
compute roots, weights and weight spaces, we work with that field.

\section{Complex representations of complex semisimple Lie algebras}\label{se3}

In this section we briefly review the theory of weights of representations
of complex semisimple Lie algebras (see \cite[\S~7.3]{HN}, \cite[Chapter V]{Kn}
for details).
We also describe the straightforward algorithm to decompose a representation
of a complex semisimple Lie algebra as a direct sum of irreducible summands.

Let $\g^\C$ be a complex semisimple Lie algebra and $\rho^\C \,:\, \g^\C
\,\longrightarrow\,\gl(V^\C)$ a finite-dimensional complex representation.

Let $\cc^\C$ be a Cartan subalgebra of $\g^\C$. We consider the corresponding
root system. A positive system of roots, for example one
determined by fixing a basis of $\mathfrak{c}^\C$, determines a
Borel subalgebra $\mathfrak{b}^\C\,=\, \mathfrak{c}^\C+\mathfrak{y}^\C$ of $\g^\C$,
where $\mathfrak{y}^\C$ is spanned by all root vectors corresponding to
the chosen system of positive roots. In the sequel we denote the space
spanned by all root vectors corresponding to negative roots by
$\mathfrak{y}^\C_-$.

A linear functional $\lambda \,:\, \cc^\C \,\longrightarrow\, \C$ is called a weight of
$V$ if the space
$$V_\lambda^\C \,=\, \{ v\,\in\, V^\C\, \mid\, H\cdot v\, =\, \lambda(H)v\ \text{ for all }
H\,\in\, \cc^\C\}$$
is nonzero. In that case $V_\lambda^\C$ is called the weight space corresponding to
$\lambda$. The weight $\lambda$ is called a {\em highest weight} if
$N\cdot v \,= \,0$ for all $N\,\in \,\mathfrak{y}^\C$ and $v\,\in\, V_\lambda^\C$. In that case
a nonzero $v\,\in\, V_\lambda^\C$ is called a {\em highest weight vector}.
The next theorem is well known, see for example \cite[Theorem 7.3.6]{HN}.

\begin{theorem}\label{thm1}\mbox{}
\begin{enumerate}
\item If $\rho^\C$ is irreducible, then there is a unique highest weight
$\lambda$, and $\dim V_\lambda^\C \,=\, 1$. Moreover, if $v_\lambda$ denotes a
fixed nonzero element of $V_\lambda^\C$, then $V$ is spanned by elements
$Y_1\cdots Y_k \cdot v_\lambda$, where $k\,\geq\, 0$ and $Y_i\,\in\, 
\mathfrak{y}^\C_-$.

\item If $\lambda$ is a highest weight of $V$, and $v\,\in\, V_\lambda^\C$ is
nonzero, then the $\g^\C$-submodule of $V^\C$ generated by $v$ is
irreducible. 
\end{enumerate}
\end{theorem}

Theorem \ref{thm1} implies the following corollary (see \cite[Theorem 2.2.14]{CS} for a
similar statement).

\begin{corollary}\label{cor1}
Let $U\, =\, \{ v\,\in\, V^\C \,\mid\, N\cdot v \,=\, 0\ \text{ for all } N\,\in
\,\mathfrak{y}^\C\}$.
Let $u_1,\,\cdots,\,u_s$ be a basis of $U$ consisting of weight vectors. Let
$V_i^\C$ denote the $\g^\C$-submodule of $V^\C$ generated by $u_i$. Then
each $V_i^\C$ is irreducible and $V^\C$ is their direct sum.
\end{corollary}

This immediately yields an algorithm for the decomposition of $V^\C$. It consists
of the following steps:
\begin{enumerate}
\item Compute a basis of the space $U$. (This boils down to solving a set of
linear equations.)

\item $U$ is stable under $\cc^\C$; by diagonalizing the action of $\cc^\C$ on
$U$ compute a basis $u_1,\,\cdots, \,u_s$ of $U$ consisting of weight vectors.

\item Return the $\g^\C$-modules generated by the $u_i$ for $1\,\leq\, i\,\leq\, s$.
\end{enumerate}

\begin{remark}\label{rem:basis}
As in Corollary \ref{cor1} let $V_i^\C$ denote the $\g^\C$-module generated by
$u_i$. By Theorem \ref{thm1} a basis of $V_i^\C$ can by found by computing
enough elements of the form $Y_1\cdots Y_k\cdot u_i$, where $Y_i$ are basis
elements of $\mathfrak{y}^\C_-$, and taking a linearly independent set.
However, in the obvious algorithm for that one needs to compute many elements
of that form, and test linear dependence many times. There is an approach,
based on Littelmann's path model and the theory of canonical bases,
that yields a basis of $V_i^\C$ without performing any checks of linear
independence. Moreover, each basis element is obtained by acting with one
$Y_i$ on a basis element obtained previously. It is beyond the scope of this
paper to describe this algorithm in detail; it has been indicated in
\cite[Section 5.2.1]{deG2}, where an algorithm is given for computing an
admissible lattice of an irreducible module, using exactly the same approach.
\end{remark}

\section{Real representations of real semisimple Lie algebras}\label{se4}

We now consider a real representation $\rho \,:\, \g\,\longrightarrow\, \gl(V)$ of the real
semisimple Lie
algebra $\g$. We use the notation introduced in Section \ref{se2n}.
The aim of this section is to describe an algorithm for finding irreducible
submodules of $V$ such that $V$ is their direct sum. We have divided the
approach in
several subsections. In the first subsection we consider an involution
of the set of roots, and define three related objects: an element of
the Weyl group, an automorphism of $\g^\C$ and an element of $G_\rho^\C$.
The second subsection has the main algorithm as well as the results that
underpin it. The third section is devoted to an elaborate example illustrating
the algorithm. 
In the fourth subsection we briefly discuss our implementation and give
some run-times of our algorithm on a few sample inputs. 

We use a fixed Cartan subalgebra $\cc$ of $\g$. While this can
be any Cartan subalgebra of $\g$, in our implementation we use a
maximally non-compact Cartan subalgebra --- in case the
algebra is not compact (see Section \ref{sec:impl}). 

The complexification $\cc^\C$ is a Cartan subalgebra of $\g^\C$. 
Let $R$ be the set of roots of $\cc^\C$ in $\g^\C$, and let $R^+$ be a fixed
positive
system (for example one which is determined by fixing a basis of
$\mathfrak{c}$ as explained in Section \ref{se2n}). We let $\y^\C$
be the subalgebra of $\g^\C$ spanned by all $\g^\C_\alpha$ for $\alpha\,\in \,R^+$.

\subsection{An involution on the roots}\label{sec:invol}

Let $H\,\in\, \cc^\C$ and $X\,\in\, \g^\C_\alpha$; then
$$[H,\,\sigma(X)] \,= \,\sigma([\sigma(H),\,X]) \,=\, \overline{\alpha(\sigma(H))}
\sigma(X)\, .$$
Hence $\sigma(X)$ lies in the root space corresponding to the root
$H\,\longmapsto \,\overline{\alpha(\sigma(H))}$. We denote this root by $\alpha^\sigma$.
Then $\alpha\,\longmapsto \,\alpha^\sigma$ is an involution of $R$. Also we see that
$\sigma(\y^\C)$ is spanned by all $\g^\C_{\alpha^\sigma}$ for $\alpha\,\in\, R^+$.

In general, if $\lambda \,:\, \cc^\C\,\longrightarrow\,\C$ is linear then by
$\lambda^\sigma$ we denote the linear function $H\,\longmapsto\,
\overline{\lambda(\sigma(H))}$.

\begin{lemma}\label{lem1}
Let $p \,:\, R\,\longrightarrow\, R$ be defined by $p(\alpha) \,=\, \alpha^\sigma$. Then
there are simple roots $\beta_1,\,\cdots,\,\beta_l$ such that $w_{\beta_1}\cdots w_{\beta_l}
(R^+) \,=\, p(R^+)$.
\end{lemma}

\begin{proof}
If $\sigma(\mathfrak{y})$ is not equal to $\mathfrak{y}$, there must
be a simple positive root $\alpha$ such that ${\alpha}^{\sigma}$ is
negative.

List the positive roots which are mapped to negative roots by the map $p$ as $${\alpha
}_1\,=\,\alpha ,\, {\alpha}_2,\,\cdots ,\,{\alpha}_k\, ,$$ and let ${\alpha}_{k+1},\, \cdots
,\, {\alpha}_N$ be the remaining positive roots, so that $p({\alpha}_{k+1}),\, \cdots
,\, {p(\alpha}_N)$ are all positive.

Now $w_{{\alpha}_1}$ maps ${\alpha}_1$ to $-{\alpha}_1$ and permutes the remaining
positive roots, that is $$w_{{\alpha}_1}(R^+)\,=\, \{-\alpha_1,\,
\alpha_2,\,\cdots ,\, \alpha_k,\, \alpha_{k+1},\, \cdots ,\, {\alpha}_N\}\, .$$
Therefore $$p(w_{{\alpha}_1}(R^+))\, =\, \{-p({\alpha}_1),\,
p(\alpha_2),\,\cdots ,\,p(\alpha_k),\, p(\alpha_{k+1}),\, \cdots ,\,p(\alpha_N)
\}\, .$$ In this list, only $p(\alpha_2),\,\cdots ,\,p(\alpha_k)$ are
negative. Thus the number of positive roots mapped to negative roots by
$p\circ w_{\alpha_1}$ decreases by 1.

Replacing $p$ by $p\circ w_{{\alpha}_1}$ and repeating the argument we see that there
are simple roots ${\beta}_1\,=\,{\alpha}_1,\,\cdots, \, \beta_l$ such that
$p(w_{{\beta}_1}\ldots w_{\beta_l}(R^+))\,=\, R^+$, and therefore $w_{{\beta}_1}\ldots
w_{{\beta}_l}(R^+)\,=\,p(R^+)$ as $p$ is an involution.
\end{proof}

\begin{remark}\label{rem:findwi}
Note that the above proof gives an immediate algorithm for finding
$$\beta_1,\,\cdots,\,\beta_l\, .$$ Indeed, $\beta_1$ is a simple root mapped to a
negative root by $p$. Then $\beta_2$ is a simple root mapped to a negative
root by $p\circ w_{\beta_1}$ and so on. The algorithm stops when we have
found $\beta_1,\,\cdots,\,\beta_l$ such that $pw_{\beta_1}\cdots w_{\beta_l}(R^+)
\,=\,R^+$. 
\end{remark}

In the sequel we use the following notation.
Let $\beta_1,\,\cdots,\,\beta_l$ be as in Lemma \ref{lem1}. Then set
\begin{align*}
w & \,=\, w_{\beta_1}\cdots w_{\beta_l}\\
\omega &\,=\, \dot{w}_{\beta_1}\cdots \dot{w}_{\beta_l}\\
\omega_\rho &\,=\, \dot{w}_{\beta_1,\rho}\cdots \dot{w}_{\beta_l,\rho}\, .
\end{align*}

In view of what was seen in Section \ref{se2n} we now immediately
have the following:

\begin{corollary}\label{cor2}
The above defined $\omega$ is an automorphism of $\g^\C$ with $\omega(\y) \,=\, \sigma(\y)$.
Secondly, $\omega_\rho$ is an element of $G_\rho^\C$ with
$$\rho^\C( \omega(Y) )\,=\, \omega_\rho \rho^\C(Y) \omega_\rho^{-1}$$
for all $Y\,\in\, \g^\C$. 
\end{corollary}

As in Section \ref{se3} we consider the weights of $V^\C$ with respect to the
fixed Cartan subalgebra $\cc^\C$. We recall that a weight $\lambda$ is
said to be a highest weight if $N\cdot v \,=\, 0$ for all $N\,\in\, \y^\C$ and
$v\,\in\, V^\C_\lambda$.

\begin{lemma}\label{lem3}
Let $v\,\in\, V^\C$ be a highest weight vector with highest weight $\lambda$.
Then $\omega_\rho^{-1}\overline{v}$ is a highest weight vector with highest
weight $w^{-1} \lambda^\sigma$.
\end{lemma}

\begin{proof}
By definition $\rho^{\mathbb{C}}(\mathfrak{y})\cdot v\,=\,0$, and therefore
$\rho^{\mathbb{C}}(\overline{\mathfrak{y}})\cdot \overline{v}\,=\,0$. Hence by
Corollary \ref{cor2},
$$\rho^{\mathbb{C}}(\omega (\mathfrak{y}))\cdot \overline{v}\,=\,0\, .$$
Again by Corollary \ref{cor2} we have 
$\rho^{\mathbb{C}}(\omega (Y))\,=\,{\omega}_{\rho}
\rho^{\mathbb{C}}(Y)\omega^{-1}_{\rho}$, so that
$$
\rho^{\mathbb{C}}(\mathfrak{y})\omega^{-1}_\rho \overline{v}\, =\, 0\, .
$$
Consequently $\omega_\rho^{-1}\overline{v}$ is a highest weight vector.

Now, for all $H\,\in\, \mathfrak{c}^{\mathbb{C}}$ we have $\rho^\C(H)v\,=\,
\lambda(H)v$, which implies that $$\rho^\C(H)\overline{v}\,=\,\lambda^{\sigma}(H)
\overline{v}\, .$$ Therefore,
\begin{multline*}
\rho^\C(H) \omega^{-1}_\rho \overline{v}\,=\,
\omega^{-1}_\rho \omega_\rho\, \rho^\C(H)\omega^{-1}_{\rho}\overline{v}
\,=\, \omega^{-1}_\rho\,\rho^\C(\omega(H))\overline{v}\\
=\, \omega^{-1}_\rho \lambda^\sigma(\omega(H))\overline{v}\,=\,
\lambda^\sigma(\omega (H))(\omega^{-1}_{\rho}\overline{v})
\,=\, (w^{-1}\lambda^\sigma (H))(\omega^{-1}_\rho\,\overline{v})\, .
\end{multline*}
Here the last equality follows from \eqref{eqw}. This establishes the last
assertion in the lemma.
\end{proof}

In the sequel for a weight $\mu\in(\cc^\C)^*$
we write
\begin{equation}\label{th}
\Theta(\mu) \,=\, w^{-1} \mu^\sigma\, .
\end{equation}

\begin{lemma}\label{lem4}
The above function $\Theta$ is an involution.
\end{lemma}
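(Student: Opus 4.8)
The goal is to show $\Theta(\Theta(\lambda)) = \lambda$ for every complex linear function $\lambda$ on $\mathfrak{c}^{\mathbb C}$, where $\Theta(\lambda) = \omega^{-1}\lambda^{\sigma}$. My plan is to reduce this to a statement purely about the element $\omega$ of Corollary \ref{cor2}: namely that $\sigma(\omega)\,\omega$ centralizes $\mathfrak{c}^{\mathbb C}$.

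First I would record the ``intertwining'' relation between the operation $\mu\mapsto\mu^{\sigma}$ and the Weyl action, namely
$$
(\,\omega^{-1}\mu\,)^{\sigma} \;=\; \sigma(\omega)^{-1}\,\mu^{\sigma}
$$
for any complex linear function $\mu$ on $\mathfrak{c}^{\mathbb C}$. This is a short computation using the two conventions already in force --- $(\omega^{-1}\mu)(H) = \mu(\omega H\omega^{-1})$ as in the proof of Lemma \ref{lem3}, and $\mu^{\sigma}(H) = \overline{\mu(\sigma(H))}$ --- together with the facts that $\sigma$ is an involutive automorphism of $G^{\mathbb C}$, so that $\omega\,\sigma(H)\,\omega^{-1} = \sigma\bigl(\sigma(\omega)\,H\,\sigma(\omega)^{-1}\bigr)$, and that $\mu\mapsto\mu^{\sigma}$ is $\mathbb{C}$-antilinear, which cancels the two complex conjugations. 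Granting this, and using $(\lambda^{\sigma})^{\sigma}=\lambda$, one gets at once
$$
\Theta(\Theta(\lambda)) \;=\; \omega^{-1}\bigl(\omega^{-1}\lambda^{\sigma}\bigr)^{\sigma} \;=\; \omega^{-1}\,\sigma(\omega)^{-1}\,\lambda \;=\; \bigl(\sigma(\omega)\,\omega\bigr)^{-1}\lambda\, ,
$$
so that $\Theta$ is an involution provided $g:=\sigma(\omega)\,\omega$ acts trivially on $\mathfrak{c}^{\mathbb C}$.

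To prove $g\in Z_{G^{\mathbb C}}(\mathfrak{c}^{\mathbb C})$, I would observe two things. First, $g$ normalizes $\mathfrak{c}^{\mathbb C}$: $\omega$ does by Corollary \ref{cor2}, and $\sigma(\omega)$ does because $\sigma$ is an automorphism carrying $\mathfrak{c}^{\mathbb C}$ to itself. Second, $g$ normalizes $\mathfrak{y}$: by Corollary \ref{cor2} one has $\omega\,\mathfrak{y}\,\omega^{-1}=\sigma(\mathfrak{y})$, whence
$$
g\,\mathfrak{y}\,g^{-1} \;=\; \sigma(\omega)\,\sigma(\mathfrak{y})\,\sigma(\omega)^{-1} \;=\; \sigma\bigl(\omega\,\mathfrak{y}\,\omega^{-1}\bigr) \;=\; \sigma\bigl(\sigma(\mathfrak{y})\bigr) \;=\; \mathfrak{y}\, .
$$
Hence $g$ normalizes the Borel subalgebra $\mathfrak{b}=\mathfrak{c}^{\mathbb C}+\mathfrak{y}$, so $g$ lies in the Borel subgroup $B$, since a Borel subgroup is self-normalizing. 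An element of $B$ that also normalizes $\mathfrak{c}^{\mathbb C}$ represents a Weyl group element preserving the positive system $R^{+}$, and is therefore trivial in the Weyl group; equivalently $B\cap N_{G^{\mathbb C}}(\mathfrak{c}^{\mathbb C}) = Z_{G^{\mathbb C}}(\mathfrak{c}^{\mathbb C})$. Thus $g\in Z_{G^{\mathbb C}}(\mathfrak{c}^{\mathbb C})$, so $g\,H\,g^{-1}=H$ for all $H\in\mathfrak{c}^{\mathbb C}$, and the displayed formula for $\Theta(\Theta(\lambda))$ collapses to $\lambda$.

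The routine part is the intertwining identity of the first step; the one place to be careful there is that the conjugate element $\sigma(\omega)$ --- not $\omega$ --- appears, which is precisely where one uses that $\sigma$ is a group automorphism rather than merely a linear map. The conceptual heart, and the only step that uses anything structural, is the claim $g\in Z_{G^{\mathbb C}}(\mathfrak{c}^{\mathbb C})$: the point is that an element normalizing both a Cartan subalgebra and the nilradical of a Borel containing it must lie in the corresponding maximal torus. I expect no difficulty beyond bookkeeping, since everything rests on Corollary \ref{cor2}.
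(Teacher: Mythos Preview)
Your proof is correct and follows essentially the same route as the paper: both reduce $\Theta^2(\lambda)=\lambda$ to the statement that $\overline{\omega}\,\omega$ acts trivially on $\mathfrak{c}^{\mathbb C}$, and both deduce this from the observation that $\overline{\omega}\,\omega$ normalizes $\mathfrak{y}$ (equivalently, permutes $R^{+}$). The only cosmetic difference is that the paper phrases the last step directly in Weyl-group language (an element of $W$ preserving $R^{+}$ is the identity), whereas you pass through the group-theoretic facts $N_{G^{\mathbb C}}(B)=B$ and $B\cap N_{G^{\mathbb C}}(\mathfrak{c}^{\mathbb C})=Z_{G^{\mathbb C}}(\mathfrak{c}^{\mathbb C})$; these are equivalent formulations.
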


\begin{proof}
By \eqref{eqw}, for $H\,\in\, \cc^\C$ we have $w^{-1}\mu^\sigma(H) \,=\,
\mu^\sigma( \omega(H))$. The latter is equal to
$\overline{\mu( \sigma\omega(H) )}$, so that 
$$\Theta^2(\mu) (H) \,=\, \mu( \sigma\omega\sigma\omega (H) ).$$
On the one hand it is straightforward to see that $\sigma \exp( \ad\, X ) \,=\,
\exp( \ad\, \sigma(X) )\sigma$ for any $X\,\in\, \g^\C$. Hence $\sigma \omega \,=\,
\widehat{\omega} \sigma$, where $\widehat\omega$ is an inner automorphism of $\g^\C$.
So $\sigma\omega\sigma\omega\, =\, \widehat{\omega}\omega$ is an inner automorphism
of $\g^\C$. It stabilizes $\cc^\C$ therefore its restriction to $\cc^\C$ is an
element $v$ of the Weyl group.

On the other hand we have $\omega (\y) \,=\, \sigma(\y)$
(Corollary \ref{cor2}) so that $\sigma\omega\sigma\omega( \y) \,=\, \y$. This
implies that $v$ maps positive roots to positive roots, which in turn implies
that $v$ is the identity. The conclusion is that $\sigma\omega\sigma\omega(H)
=H$ for all $H\in \cc^\C$, showing that $\Theta^2(\mu) \,=\, \mu$.
\end{proof}

\subsection{Decomposing a real representation}

Here we describe our algorithm. One of the main points is the next lemma,
relating the map $\Theta$ to the representation theory of $\g$.

In \cite[\S~7, Equation (10)]{On} an involution of the set of
weights in $(\cc^\C)^*$ is defined; this involution is denoted $s_0$. Here we do not
give the details of this definition, but refer to \cite{On} for the main properties
of $s_0$. For us the following is relevant.

\begin{lemma}
For $\Theta$ in \eqref{th} the equality $$\Theta\,=\, s_0$$ holds.
\end{lemma}

\begin{proof}
Let $\lambda$ be a dominant weight. We will show that $\Theta(\lambda)\,=\,s_0(\lambda)$;
this would prove the lemma.
Let $V^\C$ be the irreducible $\g^\C$-module of highest weight $\lambda$. Let
$V_\R^\C$ denote its realification. Let $\rho$ denote the corresponding
representation of $\g$. Consider the complexification $(V^\C_\R)^\C$ of $V^{\C}_{\R}$,
which is a $\g^\C$-module. By combining \cite[\S~8, Proposition 1]{On}
and \cite[\S~8, Theorem 3]{On} we see that $(V^\C_\R)^\C$ is the direct sum of
two irreducible $\g^\C$-modules $U^\C$ and $U_0^\C$ of highest weights $\lambda$ and
$s_0(\lambda)$ respectively.
So if $s_0(\lambda)\,=\,\lambda$ then the only highest weight occurring in
$(V^\C_\R)^\C$ is $\lambda$. Hence using Lemma \ref{lem3} it follows that 
$\Theta(\lambda)\,=\,\lambda$ as well. If $s_0(\lambda)\,\neq\, \lambda$, then the
direct sum decomposition of the $\g^\C$-module $(V^\C_\R)^\C$ is unique.
Let $v$ be a highest-weight vector of $U^\C$. If $\omega_\rho^{-1} \overline{v}$ is a
multiple of $v$, then $U^\C \,=\, \overline{U^\C}$ so that
$$M\,=\,\{ u\,\in\, U^\C \,\mid\, u \,= \,\overline{u}\}$$
forms a nontrivial irreducible $\g$-submodule of $V^\C_\R$. But from the fact that
$s_0(\lambda)\,\neq\, \lambda$ and \cite[\S~8, page 69, point 3]{On}
it follows that $V^\C_\R$ is irreducible as $\g$-module. Therefore, we have
$M\,=\,V_\R^\C$. But that cannot happen as $\dim_\R M = \dim_\C U^\C= \tfrac{1}{2}
\dim_\R V_\R^\C$. Hence $\omega_\rho^{-1} \overline{v}$ is a highest weight vector of
$U_0^\C$. But that implies that $s_0(\lambda)\,=\,\Theta(\lambda)$ by Lemma
\ref{lem3}.
\end{proof}

In what follows we just use the symbol $\Theta$, and not $s_0$; this is also followed while
referring to results of \cite{On}.

Let $\lambda$ be a dominant weight, and let $\epsilon(\lambda)$ be its
Cartan index \cite[\S~8]{On} (there this index is denoted
$\epsilon(\g_0,\rho_0)$, where $\g_0$ is the real semisimple Lie algebra,
and $\rho_0$ is the representation corresponding to $\lambda$). Let $V^\C$ be an
irreducible $\g^\C$-module of highest weight $\lambda$.
Now $\lambda$ determines an irreducible $\g$-module in precisely one of the
following ways:
\begin{enumerate}
\item \textbf{(Type I)}~
Real case: $\Theta(\lambda)\,=\,\lambda$, $\epsilon(\lambda)\,=\,1$. Then $V^\C$ has
a basis whose real span is invariant under $\g$. This real span is the
irreducible module.

\item \textbf{(Type II)}~
Quaternion case: $\Theta(\lambda) \,=\, \lambda$, $\epsilon(\lambda)\,=\,-1$.
The realification $V_\R^\C$ is the irreducible module determined by
$\lambda$.

\item \textbf{(Type III)}~ Complex case: $\Theta(\lambda)\,\neq\, \lambda$.
The realification $V_\R^\C$ is the irreducible module determined by
$\lambda$.
\end{enumerate}

Conversely, every irreducible $\g$-module arises in one of the above three ways.

Note that types II and III are different in the following way: in type II
we have that $(V_\R^\C)^\C$ is the direct sum of two irreducible $\g^\C$-modules
of the same highest weight, whereas in type III, $(V_\R^\C)^\C$ is the direct
sum of two irreducible $\g^\C$-modules of different highest weights.

The next theorem is the basis of our algorithm.

\begin{theorem}\label{thm:des}
Let $\rho \,:\, \g\,\longrightarrow\, \gl(V)$ be a finite-dimensional representation of 
$\g$. Let $\mu$ be the highest weight
of an irreducible summand of the $\g^\C$-module $V^\C$. Let $V^\C(\mu)$ be the
space of highest-weight vectors in $V^\C$ of weight $\mu$. Then we have the
following cases.
\begin{enumerate}
\item If $\Theta(\mu)\,=\,\mu$, $\epsilon(\mu)\,=\,1$, then $V^\C(\mu)$ has a
basis $u_1,\,\cdots,\,u_s$ with the following property. Let $M_i$ denote the
irreducible $\g^\C$-submodule of $V^\C$ generated by $u_i$. Then $M_i \,=\,
\overline{M}_i$ and the real points in $M_i$ form an irreducible summand
of $V$.

\item If $\Theta(\mu)\,=\,\mu$, $\epsilon(\mu)\,=\,-1$, then $\dim V^\C(\mu)$
is even and $V^\C(\mu)$ has a basis consisting of
$u_1,\,\cdots,\,u_s$, $v_1,\,\cdots,\,v_s$ with the
following property. Let $M_i$ and $N_i$ denote the
irreducible $\g^\C$-submodules of $V^\C$ generated by $u_i$ and $v_i$
respectively. Then $N_i \,=\, \overline{M}_i$, and the real points of
$M_i\oplus N_i$ form an irreducible summand of $V$.

\item If $\Theta(\mu)\,\neq\, \mu$, then setting $\nu \,=\, \Theta(\mu)$ we have
that $\nu$ is also a highest weight of $V^\C$. Moreover, there are bases
$u_1,\,\cdots,\,u_s$ of $V^\C(\mu)$ and $v_1,\,\cdots,\,v_s$ of $V^\C(\nu)$
with the following property. Let $M_i$ and $N_i$ denote the
irreducible $\g^\C$-submodules of $V^\C$ generated by $u_i$ and $v_i$
respectively. Then $N_i \,=\, \overline{M}_i$ and the real points of
$M_i\oplus N_i$ form an irreducible summand of $V$.
\end{enumerate}
\end{theorem}

\begin{proof}
The proof shows how to find these bases. We use the following key result
(Lemma \ref{lem3}):
let $v\,\in\, V^\C$ be a highest-weight vector of weight $\mu$, and let $M$ be
the irreducible $\g^\C$-module generated by $v$. Then
$\omega_\rho^{-1}\overline{v}$ is a highest-weight vector of weight $\Theta(\mu)$,
and the irreducible module generated by it is $\overline{M}$.

Suppose that we are in the first case. We show how to obtain a finite set of
elements $S$ of $V^\C(\mu)$ such that
\begin{itemize}
\item $S$ spans $V^\C(\mu)$, and

\item for each $u\,\in\, S$ the $\g^\C$-module $M$ generated by $u$ satisfies
$M\,=\,\overline{M}$.
\end{itemize}

Let $u\,\in\, V^\C(\mu)$ be an element of a fixed basis of $V^\C(\mu)$, and set
$v\,=\,\omega_\rho^{-1}\overline{u}$. Let $M$ and $N$ be the $\g^\C$-modules generated by
$u$ and $v$ respectively. If $v$ is a scalar multiple of $u$ then $M\,=\,N$ so that
$\overline{M} \,=\, M$ and we add $u$ to $S$. Otherwise set $W\,=\,M\oplus N$; we have
that $\overline{W} \,=\, W$ so the real points $W(\R)$ in $W$ form a $\g$-module.
The only highest-weight occurring in $W(\R)^\C$ is $\mu$ so $W(\R)\,=\,W_1\oplus W_2$,
where $W_i$ is an irreducible $\g$-module of type I corresponding to the
highest weight $\mu$. It follows that there is a $\delta\,\in\, \C$ such that
$u+\delta v$ is a highest weight vector of $W_1^\C$. Since $W_1^\C$ is self
conjugate we have that $\omega_\rho^{-1} \overline{u+\delta v}$ is a scalar
multiple of $u+\delta v$. Observe also that $\omega_\rho^{-1} \overline{v} \,=\, \gamma u$
for some $\gamma\,\in\, \C$. Hence
$$\omega_\rho^{-1} \overline{u+\delta v}\, =\, v + \overline{\delta} \gamma u\, .$$
This being a scalar multiple of $u+\delta v$ we conclude that $\gamma \,=\, \tfrac{1}
{|\delta|^2}$, so that $\gamma$ is a positive real number. Then it follows
that $$u_{\pm}\,=\, u \pm \tfrac{1}{\sqrt{\gamma}} v$$ are highest weight vectors
with $\omega_\rho^{-1}(\overline{u}_{\pm}) \,=\, \pm\sqrt{\gamma} u_{\pm}$.
So we add the elements $u_{\pm}$ to the set $S$.

We do this for all elements of a basis of $V^\C(\mu)$. Then from $S$ we select a
maximal linearly independent subset, and we are done in the first case.

Now consider the second case. Here an irreducible module corresponding to $\mu$
is of type II. So for a nonzero $u\,\in\, V^\C(\mu)$ we cannot have that
$\omega_\rho^{-1}\overline u$ is a scalar multiple of $u$.

Suppose that we have
constructed $u_1,\,\cdots,\,u_k$, $v_1,\,\cdots,\,v_k$. Let $u\,\in \,V^\C(\mu)$ not lie in
the span of these elements. Then set $u_{k+1} \,=\, u$, $v_{k+1} \,=\, \omega_\rho^{-1}
\overline{u}$. We show that $u_1,\,\cdots,\,u_{k+1}$, $v_1,\,\cdots,\,v_{k+1}$ are linearly
independent. Let $M_i,N_i$ be as in the theorem.
Let $A$ denote the sum of all $M_i$, $N_i$ for $1\,\leq\, i\,\leq\, k$. Then
$A$ is self-conjugate. Let $$B\,=\,M_{k+1}\oplus N_{k+1}\, .$$ Then $B$ is self-conjugate
as well. Therefore, the intersection of $A$ and $B$ is self-conjugate.
So if $A\cap B\,\neq\, 0$, then it contains a nonzero real point. But the
real points of $B$ form an irreducible $\g$-module. Consequently, $B\,\subset\, A$,
which is not the case. Hence $A\cap B\,=\,0$ which completes the proof.

The proof of the third case is similar. (In fact it is even more straightforward.)
\end{proof}

\subsection{The algorithm}\label{sec:algorithm}

Below we give a
the steps of the algorithm in detail. We use three subroutines, which are
given below the main algorithm. Their inclusion into the main algorithm
would obscure the structure of the latter. Therefore we give them separately.

The algorithm follows directly from Theorem \ref{thm:des}. The proof of the
theorem also proves the correctness of the algorithm.

The input is a real semisimple Lie algebra $\g$ over $\R$ together with a
finite-dimensional representation
$$\rho\,:\, \g \,\longrightarrow\, \gl(V)\, .$$
The output is a set of irreducible submodules of $V$ such that $V$ is their
direct sum.

The algorithm takes the following steps:
\begin{enumerate}
\item Compute a Cartan subalgebra $\cc$ of $\g$.

\item Compute the root system $R$ of $\g^\C$ with respect to $\cc^\C$.

\item Compute a set of simple roots $\{\alpha_1,\,\cdots,\,\alpha_\ell\}$ of
$R$ and for $1\leq j\leq \ell$ let $X_{\alpha_j}$ be a fixed nonzero
element of the root space $\g^\C_{\alpha_j}$.

\item Compute the highest weights $\mu_1,\,\cdots,\,\mu_s$ of the
complexification $V^\C$ of $V$ (see Section \ref{se3}).

\item For $1\leq i\leq s$ compute a basis of the space
$$V^\C(\mu_i) \,=\, \{ v\,\in\, V^\C \,\mid\, \rho^\C(H)v = \mu_i(h) v\ \text{ and }\
\rho^\C(X_{\alpha_i})v\, =\, 0 \ \text{ for }\ 1\,\leq\, j\,\leq\, \ell\}$$
(which is the space of highest weight vectors of weight $\mu_i$).

\item Set $Q \,=\, \emptyset$. For $i\,=\,1,\,\cdots,\,s$ do:
\begin{enumerate}
\item[(a)] If $\Theta(\mu_i)\,=\,\mu_i$ and $\epsilon(\mu_i)\,=\,1$, then
set $Q \,:=\, Q\cup {\tt SubmodulesI}(\mu_i,V^\C(\mu_i))$. 

\item[(b)] If $\Theta(\mu_i)\,=\,\mu_i$ and $\epsilon(\mu_i)\,=\,-1$, then
set $Q \,:=\, Q\cup {\tt SubmodulesII}(\mu_i,V^\C(\mu_i))$.

\item[(c)] If $\Theta(\mu_i)\,\neq \,\mu_i$, then set $\nu \,= \,\Theta(\mu_i)$.
If $\nu\,=\,\mu_j$ with $j\,>\,i$, then 
$Q \,:= \,Q\cup {\tt SubmodulesIII}(\mu_i,V^\C(\mu_i))$. If $j\,<\,i$ then
in this step we do nothing.
\end{enumerate}
\item Return $Q$.
\end{enumerate}

Now we describe the subroutines used in the algorithm. They correspond exactly
to the three cases of Theorem \ref{thm:des}.

{\tt SubmodulesI}$(\mu_i,V^\C(\mu_i))$ takes the following steps:
\begin{enumerate}
\item Set $S\,=\,\emptyset$, and for $u$ in a basis of $V^\C(\mu_i)$ we do following:
\begin{enumerate}
\item Set $v\,=\,\omega_\rho^{-1} \overline{u}$.

\item If $v$ is a scalar multiple of $u$ then add $u$ to $S$.

\item Otherwise compute $\gamma\,\in \,\C$ such that $\omega_\rho^{-1} \overline{v}\,=\,\gamma u$.
(In the proof of Theorem \ref{thm:des} we have seen that $\gamma\,\in\, \R$ and
$\gamma \,>\,0$). Add the elements $u\pm \tfrac{1}{\sqrt{\gamma}} v$ to $S$.
\end{enumerate}
\item Compute a maximal linearly independent subset $\{u_1,\,\cdots,\,u_t\}$ of $S$.

\item For any $1\,\leq\, k\,\leq\, t$ let $W_k^\C$ be the $\g^\C$-submodule of $V^\C$
generated by $u_k$. Let $W_k$ be the module of real points of $W^\C$.

\item Return $W_1,\,\cdots,\,W_t$.
\end{enumerate}

{\tt SubmodulesII}$(\mu_i,V^\C(\mu_i))$ takes the following steps:
\begin{enumerate}
\item Set $A\,=\,B\,=\,\emptyset$. While $|A|+|B| \,<\, \dim V^\C(\mu_i)$ do the following:
\begin{enumerate}
\item Let $u\,\in\, V^\C(\mu_i)$ lie outside the span of $A\cup B$.
\item Set $A\,:=\, A\cup \{u\}$, $B\,:=\, B\cup\{ \omega_\rho^{-1}\overline{u}\}$.
\end{enumerate}
\item Write $A\,=\,\{ u_1,\,\cdots,\,u_t\}$. For $1\,\leq\, k\,\leq \,t$, let
$M_k$ and $N_k$ be the $\g^\C$-submodules of $V^\C$ generated by
$u_k$ and $\omega_\rho^{-1} \overline{u}_k$ respectively. Let $W_k$ be the $\g$-module
consisting of the real points of $M_k\oplus N_k$.
\item Return $W_1,\,\cdots,\,W_t$.
\end{enumerate}

{\tt SubmodulesIII}$(\mu_i,V^\C(\mu_i))$ takes the following steps:
\begin{enumerate}
\item Let $u_1,\,\cdots,\,u_t$ be a basis of $V^\C(\mu_i)$. 

\item For $1\,\leq\, k\,\leq\, t$, let
$M_k$ and $N_k$ be the $\g^\C$-submodules of $V^\C$ generated by
$u_k$ and $\omega_\rho^{-1} \overline{u}_k$ respectively. Let $W_k$ be the $\g$-module
consisting of the real points of $M_k\oplus N_k$.

\item Return $W_1,\,\cdots,\,W_t$.
\end{enumerate}

\begin{remark}
In the above algorithm we do not need to compute $\epsilon(\mu_i)$. Instead we may
just compute $v \,=\, \omega_\rho^{-1}\overline{u}$ for a nonzero $u\,\in\, V^\C(\mu_i)$. If
$v$ is a multiple of $u$, then necessarily $\epsilon(\mu_i)\,=\,1$. Otherwise
we compute $\gamma\,\in \,\C$ such that $\omega_\rho^{-1}\overline{v}\,=\,\gamma u$.
If $\gamma$ is a positive real number, then again $\epsilon(\mu_i)\,=\,1$, otherwise
it is $-1$.
\end{remark}

\begin{remark}
The irreducible $\g$-modules of type II and type III are easily constructed.
The algorithm given here can be used to construct the modules of
type I as well. This works as follows. Let $\lambda$ be a dominant weight with
$\Theta(\lambda)\,=\,\lambda$ and $\epsilon(\lambda)\,=\,1$. Then construct
the irreducible $\g^\C$-module $V^\C$ with highest weight $\lambda$. Let
$V_\R^\C$ be its realification. Then the $\g$-module $V_\R^\C$ splits as
$V_\R^\C \,=\,
W_1\oplus W_2$, where $W_1,\, W_2$ are isomorphic irreducible $\g$-modules of
type I.
\end{remark}

\begin{remark}\label{rem:symbolic}
In some settings the $\g$-module $V$ is not given explicitly. Here we
consider two such examples. First let $W$ be an irreducible $\g$-module
of type I, II or III, just
given by its highest weight, and set $V\,=\,W\otimes W$. A second example occurs
when $\g$ is a subalgebra of a semisimple subalgebra $\a$ and $V$ is
an irreducible $\a$-module given by its highest weight. We consider $V$
as $\g$-module (in this case the decomposition of $V$ is called
a branching rule). In situations like these we can also find the decomposition
of $V$. Since we do not have $V$ explicitly we just want the types and
highest weights of the direct summands of $V$.

As before we let $\cc$ be a Cartan subalgebra of $\g$.
Then we first find the weights of $V^\C$ (with respect to
$\cc^\C$) and their multiplicities (that is, the dimensions of the
corresponding weight spaces). If $V\,=\,W\otimes W$ as above then we can determine
the weights of $W^\C$ and their multiplicities by Freudenthal's formula
(see \cite[\S~8.8]{deG3}). From these we immediately get the weights
and multiplicities of $V^\C$. For the computation of a branching rule we
first compute a Cartan subalgebra $\h^\C$ of $\a^\C$ containing $\cc^\C$.
Again the weights (with respect to $\h^\C$) and multiplicities of the
$\a^\C$-module $V^\C$ can be computed using Freudenthal's formula. From that
the weights of $V^\C$ can readily be computed: given bases of $\h^\C$ and
$\cc^\C$ this is a simple projection; see \cite[\S~8.13]{deG3}. 

From the weights of $V^\C$ and their multiplicities it is straightforward to
find the highest weights of the irreducible summands of $V^\C$ (see
\cite[\S~8.12]{deG3} for a description of the algorithm for this purpose).
Let $\mu$ be such a highest weight, and let $m$ denote the number of summands
with highest weight $\mu$.
If $\Theta(\mu)\,=\,\mu$, then we compute the Cartan index $e\,=\,\epsilon(\mu)$
(see \cite[\S~8]{On}). If $e\,=\,1$, then $\mu$ is the highest weight of a module
of type I, occurring with multiplicity $m$. If $e\,=\,-1$, then $\mu$ is the
highest weight of an irreducible module of type II, occurring with
multiplicity $\tfrac{m}{2}$. If $\nu\,=\,\Theta(\mu)\,\neq\, \mu$, then $\mu$
determines an irreducible submodule of type III, occurring with multiplicity $m$.

We remark that in this setting, since the modules are not explicitly
given --- and consequently we cannot compute the element $\omega_\rho^{-1}$ --- we
need to compute the Cartan index $\epsilon(\mu)$ of a weight $\mu$.
Here we do not go into how this is done, but refer to \cite[\S~8]{On}.
\end{remark}

\subsection{An example}

We let a semisimple real matrix Lie algebra act on spaces of homogeneous
polynomials of a fixed degree. This works as follows.

Let $\g \,\subset\, \gl(n,\R)$ be a Lie subalgebra. Then $\g$ acts naturally on
the space $U\,=\,\R^n$. So it also acts on the dual space $U^*$ by
$(X\cdot f)(u) = -f(Xu)$. Let $v_1,\,\cdots,\,v_n$ be the natural basis of
$U$, and let $x_1,\,\cdots,\,x_n$ be dual the basis of $U^*$ defined by $x_i(v_j) \,=
\,\delta_{ij}$. Then $\g$ acts on the polynomial ring $\R[x_1,\,\cdots,\,x_n]$ by
$$ X\cdot f \,=\, \sum_{i=1}^n (X\cdot x_i) \partial_{x_i}(f).$$
It is clear that the space of homogeneous polynomials of degree $d$
is a $\g$-submodule; we ask what its irreducible summands are.

In our example we let $\g \,=\, \mathfrak{so}(4) \,=\,\{ X\in \gl(4,\R) \,\mid\,
X^T \,=\, -X \}$. A basis of $\g$ is
\begin{align*}
&e_1\,=\, e_{12}-e_{21}\, ,e_2\,=\, e_{13}-e_{31}\, ,e_3\,=\, e_{14}-e_{41}\, ,\nonumber \\
&e_4\,=\, e_{23}-e_{32}\, ,e_5\,=\, e_{24}-e_{42}\, , e_6\,=\, e_{34}-e_{43}.
\end{align*}

In the first three steps of the algorithm we compute a Cartan subalgebra $\cc$
of $\g$, its root system and a set of simple roots and corresponding root
vectors. 

In the Lie algebra at hand a Cartan subalgebra $\cc$ is spanned by $e_2,\,e_5$. 
The roots of $\cc$ are
$$
\alpha\,:=\, (-\sqrt{-1}\, , \sqrt{-1})\, ,\ \ \beta\,:=\, (\sqrt{-1}\, ,\ \
\sqrt{-1})\, ,\ -\alpha\, ,\ -\beta\, .
$$
Thus the root system is of type $A_{1} \times A_{1}$. Notice that conjugation maps every root to its negative. 

In order to describe the root vectors we define the following elements
\begin{align*}
& I_1\,=\, e_2-e_5\, ,\ J_1\,=\, e_3+e_4\, ,K_1\,=\, e_1+e_6\\
& I_2\,=\, e_2+e_5\, ,\, J_2\,=\, e_3-e_4\, , K_2\, =\, e_1-e_6.
\end{align*}
Then each triple $(I_i,J_i,K_i)$ spans a copy of $\mathfrak{so}(3)$ and $\g$
is their direct sum.

Now set
\begin{align*} 
&X_{1} = \frac{J_{1}-\sqrt{-1}K_{1}}{2},~ Y_{1} = \frac{J_{1}+\sqrt{-1}K_{1}}{2}\,, \\
&X_{2} = \frac{J_{2}-\sqrt{-1}K_{2}}{2},~ Y_{2} = \frac{J_{2}+\sqrt{-1}K_{2}}{2}\,.
\end{align*}
Then $X_1,X_2$ are the positive root vectors (as well as the root vectors
corresponding to the simple roots) and $Y_1,Y_2$ are the negative root vectors.
They satisfy the commutation relations
\begin{align*}
&[X_{1},\,Y_{1}]\,=\,-\sqrt{-1}I_{1},\ [I_{1},\,X_{1}]\,=\, -2\sqrt{-1} X_{1},\ [I_{1},\,Y_{1}]\,
=\, 2\sqrt{-1}Y_{1}\,,\\
&[X_{2},\,Y_{2}]\,=\,\sqrt{-1}I_{2},\ [I_{2},\,X_{2}]\,=\, 2\sqrt{-1} X_{2},\ [I_{2},
\,Y_{2}]\,=\, -2\sqrt{-1}Y_{2}\,.
\end{align*}
Hence the two triples $\sqrt{-1}I_1,\,X_1,\,-Y_1$ and $-\sqrt{-1}I_2,\,X_2,\,-Y_2$ satisfy
the commutation relations of $\mathfrak{sl}_2$. We define
$H_1\,=\, \sqrt{-1}I_1$, $H_2\,=\,-\sqrt{-1}I_2$. Let $V^\C$ be an irreducible
$\g^\C$-module with highest weight $\lambda$. Write $m_i\,=\, \lambda(H_i)$.
Suppose that $\Theta(\lambda)\,=\,\lambda$. Then
from \cite[\S 8 Proposition 3 and Table 5, p. 79]{On} it follows that
the Cartan index of $V^\C$ is $\epsilon(\lambda) \,=\, (-1)^{m_1+m_2}$. 

We now consider the action of $\g$ on the polynomial ring in four indeterminates
which we denote $x,\,y,\,z,\,w$ (instead of $x_1,\,x_2,\,x_3,\,x_4$). As in Steps 4, 5
we first find the highest weights and the corresponding highest weight spaces.

The vector fields
corresponding to $X_1$ and $X_2$ are respectively
\begin{align*}
&\frac{(\sqrt{-1}\,y-w)}{2}\partial_{x}-\frac{(z+\sqrt{-1}\,x)}{2}\partial_{y} +\frac{(\sqrt{-1}\,w+y)}{2} \partial_{z}+ \frac{(x-\sqrt{-1}\,z)}{2}\partial_{w}\,,\\
&\frac{(\sqrt{-1}\,y-w)}{2}\partial_{x}+\frac{(z-\sqrt{-1}\,x)}{2}\partial_{y} -\frac{(\sqrt{-1}\,w+y)}{2} \partial_{z} +\frac{(x+\sqrt{-1}\,z)}{2}\partial_{w}\,,
\end{align*}
and they are of rank two. Thus, they must have two functionally independent invariants. Calculating the joint invariants in linear and then quadratic 
polynomials (see \cite{ABGM} for algorithms to do that)
we find that these invariants are
$$e\,=\,w-\sqrt{-1}\,y \ \ \text{ and }\ \ f\,=\,x^2+z^2+2y(y+\sqrt{-1}\,w)\, .$$ As $de\bigwedge df$
is not identically zero, we know that $e$ and $f$ are functionally independent joint invariants of $X_{1}$ and $X_{2}$ that generate all other invariants. Keeping in mind that $e$ and $f$ are of degrees $1$ and $2$
respectively, we obtain, in the space of homogeneous polynomials of degree $d\,\geq\, 2$, the subspace $U$ of joint invariants 
$\langle e^d,e^{d-2}f,\,
e^{d-4}f^2,\cdots,f^{d/2}\rangle$
for $d$ even, and $\langle e^d,e^{d-2}f,\,
e^{d-4}f^2,\cdots, \, ef^{(d-1)/2}\rangle$ for $d$ odd.

The vector field corresponding to $I_1$ is 
\[
-z\partial_{x} + w\partial_{y} + x\partial_{z} -y \partial_{w}
\]
and $I_{1}\cdot e \,=\, -\sqrt{-1}\, e$,\ $I_{1}\cdot f \,=\, 2\sqrt{-1}\, e^2$,
so that $H_1\cdot e \,=\, e$ and $H_1\cdot f \,=\, -2e^2$. Therefore, 
$$H_{1}\cdot e^a f^b \,= \,a e^a f^{b}-2b\, e^{a+2} f^{b-1}.$$
The matrix of $H_{1}$ on $U$ is bidiagonal with eigen-values 
\[ d, d-2,d-4, \cdots \]
We find that the matrix of $H_2$ on $U$ is exactly the same as the matrix
of $H_1$ on $U$. 

Now we compute the highest weight vectors for the modules of homogeneous
polynomials in degrees $2,3,4$. This boils down to finding the eigenvectors of
$H_1$ (as the eigenvectors of $H_2$ are exactly the same). In degree 2 they are
$e^2, f+e^2$. In degree 3 they are $e^3,\, ef+e^3$. In degree 4 they are
$e^4,\,e^4+e^2f,\, e^4+2e^2f+f^2$.

In Step 6 of the algorithm we compute $\Theta(\lambda)$ and $\epsilon(\lambda)$
for the highest weights $\lambda$. First we compute the Weyl group element
$w$ of Section \ref{sec:invol}. In the context of Lemma \ref{lem1} we can
take $l=2$ and $\beta_1\,=\,\alpha$, $\beta_2\,=\,\beta$. Hence $w\,=\, s_{\alpha}s_\beta$.
Then $w(\alpha)
\,=\,-\alpha$, $w(\beta)\,=\,-\beta$. Hence $w(\mu) \,=\, -\mu$ for all weights $\mu$.

Let $\lambda$ be the highest weight of an irreducible $\g^\C$-module. Then
$\lambda(H_i)$ are (non negative) integers. Hence $\lambda^\sigma(H_i)\, =\,
\overline{\lambda(\sigma(H_i))}\,=\, \overline{\lambda(-H_i)}\, =\, -\lambda(H_i)$,
whence $\lambda^\sigma\, =\, -\lambda$ and $\Theta(\lambda)\, =\, w^{-1} \lambda^\sigma\,
=\,\lambda$. 

Let $\lambda$ be a highest weight that we have just computed. 
Write $m_i \,=\, \lambda(H_i)$; then $m_1\,=\,m_2$. As $\epsilon(\lambda) \,=\,
(-1)^{m_1+m_2}$ it follows that the Cartan index is equal to 1.
In other words, all irreducible summands are of type I. So in the algorithm
we have to execute Step 6(a) for each highest weight that we computed.
It is cumbersome to compute the element $\omega_\rho$ by hand. However, in this
case we do not need it. Indeed,
because all highest weights occur with multiplicity one, the corresponding
irreducible submodules are self-conjugate. So if $V^\C$ is such an irreducible
submodule with highest weight vector $v$ then the real and imaginary parts of
$v$ also lie in $V^\C$. At least one of these parts is non-zero and denote it
by $\widehat v$. Then $\g \cdot \widehat{v}$ is the irreducible $\g$-module corresponding
to $\lambda$.

Below we give the decomposition of the $\g$-modules consisting of the
homogeneous polynomials of degree $d$ for $d\,=\,2,\,3,\,4$. 

a. Quadratic Polynomials:
$$\langle \g\cdot(x^2+y^2+z^2+w^2)\rangle \bigoplus \langle \g\cdot
(w^2-y^2)\rangle \, .$$

b. Cubic Polynomials: 
$$
\langle \g \cdot (w^3-3wy^2) \rangle \bigoplus \langle \g\cdot w(x^2+y^2+z^2+w^2) \rangle.
$$

c. Quartic Polynomials:
$$
\langle \g \cdot (w^4-6w^2y^2+y^4) \rangle \bigoplus \langle \g \cdot (w^2-y^2) (w^2+x^2+y^2+z^2) \rangle \bigoplus \langle \g \cdot (w^2+x^2+y^2+z^2)^2 \rangle\, .
$$

\subsection{Implementation and experimental results}\label{sec:impl}

As mentioned in Section \ref{sec:comp}, we have implemented the algorithm
using the functionality of the {\sf CoReLG} (\cite{corelg}) package for
{\sf GAP}4, \cite{gap4}.
A real semisimple Lie algebra
as constructed by that system has Cartan subalgebras that in general are split
over $\Q(\sqq)$ (and not over $\Q$). The algorithm works with an arbitrary
Cartan subalgebra $\cc$ of $\g$. Now
a maximally noncompact Cartan subalgebra $\cc$ tends to have a
basis with elements whose eigenvalues are of small complexity (meaning that
they can be expressed as $a+b\sqq$ with $a$, $b$ rational numbers with
small numerator and denominator), compared to the Cartan subalgebras with
higher compact dimensions. For this reason we use a maximally noncompact
Cartan subalgebra in our implementation.

The most memory intensive step of the algorithm is to find the highest
weight vectors in the $\g^\C$-module $V^\C$. For this reason the current
implementation of the algorithm can be applied to modules of dimensions up
to about a few thousands (depending on the computer one uses, of course). 

We have executed our algorithm on some test examples. In these examples
the module is always of the form $V=W\otimes W$ with $W$ an irreducible
$\g$-module. The run-times \footnote{The computations were done on a
2000 MHz processor with 4GB of memory for {\sf GAP}.}
are displayed in Table \ref{tab:times}. The first
column of this table has the isomorphism type of $\g$. The second column
displays the type of $V$ and the third column has the highest weight of
the corresponding complex representation. The coordinates of these weights
correspond to the nodes of the Dynkin diagram; for this the standard
(Bourbaki-) enumeration of these nodes is used (see \cite[Theorem 2.8.5]{deG2}).
The fourth column displays the dimension of $V$, and the fifth column
has the number of its irreducible summands. The next three columns show the
running times in seconds of some important parts of the algorithm. The
column labeled ``hw'' has the time spent to find the highest weight vectors
of the $\g^\C$-module $V^\C$. 
The column labeled ``basis'' shows the total time spent to find bases of
irreducible $\g^\C$-submodules of $V^\C$ given their highest weight vectors.
(The algorithm used for this is based
on the path model, as indicated in Remark \ref{rem:basis}.)
The column ``$\R$-form'' displays the time taken to find $\R$-bases of
conjugation invariant subspaces of $V^\C$. The last column has the total
time taken.

We see that the algorithm can successfully deal with
representations of dimensions up to around 3000. We also see that the
proportion of the total time taken by the various parts of the algorithm
can vary from case to case. For example, in the computation for
$\mathfrak{so}(6,3)$ most time was taken by computing bases of irreducible
$\g^\C$-modules of $V^\C$, whereas for $E_{7(-25)}$ the bottleneck was the
computation of the $\R$-basis of conjugation invariant subspaces.
All these parts of the algorithm are performed using basic linear algebra.
Their running times depend on the ``complexity'' of the vectors the
program has to deal with (that is, the size of their coefficients with respect
to the basis that is being used). 

\begin{table}[htb]

\begin{tabular}{|r|r|r|r|r|r|r|r|r|}
\hline
$\g$ & type $W$ & highest weight & $\dim W\otimes W$ & \# summands &
hw & basis & $\R$-form & time\\
\hline
$\mathfrak{so}(6,3)$ & II & $(0,0,0,1)$ & 1024 & 20 & 25 & 84 & 15 & 132\\
$\mathfrak{sp}(2,3)$ & I & $(0,1,0,0,0)$ & 1936 & 6 & 148 & 381 & 854 & 1413\\
$E_{6(2)}$ & III & $(1,0,0,0,0,0)$ & 2916 & 9 & 170 & 43 & 938 & 1420\\
$E_{7(-25)}$ & I & $(0,0,0,0,0,0,1)$ & 3136 & 4 & 405 & 638 & 4802 & 5948 \\
\hline
\end{tabular}
\caption{Runtimes in seconds on of the main algorithm on some sample inputs.}\label{tab:times}
\end{table}

We have also implemented the algorithm described in Remark \ref{rem:symbolic}
to compute branching rules. Here we just deal with the irreducible modules
in terms of their highest weights. Therefore we can deal with representations
of significantly higher dimensions. Table \ref{tab:branch} has the running
times on a few sample inputs. The first two columns give, respectively, the
simple real Lie algebra $\a$ and its subalgebra $\g$. (These subalgebras
have been determined in \cite{GM}; the package {\sf CoReLG} has functionality
for reconstructing them.) The next two columns
have the type of $V$ and its highest weight as $\a$-module. The fifth
column displays the number of irreducible modules appearing in the decomposition
of $V$ as $\g$-module. The last column has the running time in seconds
of the algorithm. 

\begin{table}[htb]

\begin{tabular}{|r|r|r|r|r|r|r|}
\hline
$\a$ & $\g$ & type $V$ & highest weight & $\dim V$ & \# summands & time\\
\hline
$E_{6(-14)}$ & $F_{4(-20)}$ & I & $(1,1,1,1,1,1)$ & 68719476736 & 1268 & 14\\
$E_{7(7)}$ & $\mathfrak{su}(4,4)$ & I & $(1,1,0,1,0,0,1)$ & 148780635003 &
5096 & 115\\
$E_{8(8)}$ & $\mathfrak{su}(2)\oplus E_{7(-5)}$ & I & $(0,1,0,0,0,0,1,1)$ &
234550030000 & 1495 & 126\\
$E_{8(-24)}$ & $\mathfrak{so}^*(16)$ & I & $(1,0,0,1,0,0,0,1)$ &
885607531900000 & 20904 & 1053\\
\hline
\end{tabular}
\caption{Runtimes in seconds on of the algorithm for computing branching rules on some sample inputs.}\label{tab:branch}
\end{table}

\section*{Acknowledgments}

One of us (HA) thanks Professors K. H. Neeb, A. W. Knapp and
T. N. Venkataramana for very helpful letters. The third-named author is supported
by a J. C. Bose Fellowship.

\end{document}